\theoremstyle{plain}
\newtheorem{thm}[equation]{Theorem}
\newtheorem{cor}[equation]{Corollary}
\newtheorem{rem}[equation]{\it Remark}
\newtheorem{lem}[equation]{Lemma}
\newtheorem*{Acknowledgements}{\it Acknowledgements}
\keywords{Ricci soliton, Upper diameter bound, Hitchin-Thorpe inequality, Index form}
\subjclass[2010]{Primary 53C21, Secondary 53C20, 53C25}
\address{Department of Mathematics, Graduate School of Science, Osaka University, 1-1 Machikaneyama, Toyonaka, Osaka 560-0043, JAPAN}
\email{h-tadano@cr.math.sci.osaka-u.ac.jp}
\address{Faculty of Mathematics, University of Santiago de Compostela, 15782 Santiago de Compostela, SPAIN}
\email{homare.tadano@usc.es}
\title{An upper diameter bound for compact Ricci solitons with applications to the Hitchin-Thorpe inequality}
\dedicatory{}
\author{Homare TADANO}
\date{April 15, 2015}
\thanks{This work was supported by Moriyasu Graduate Student Scholarship Foundation}
\begin{document}

\begin{abstract}
In this article, stimulated by Fern\'{a}ndez-L\'{o}pez and Garc\'{i}a-R\'{i}o, we shall give an upper diameter bound for compact Ricci solitons in terms of the range of the scalar curvature. As an application, we shall provide some sufficient conditions for four-dimensional compact Ricci solitons to satisfy the Hitchin-Thorpe inequality.
\end{abstract}

\maketitle

\numberwithin{equation}{section}

\section{Introduction}

A \textit{Ricci soliton} \cite{H} is a complete Riemannian manifold $(M, g)$ admitting a smooth vector field $X \in \mathfrak{X}(M)$ such that
\begin{equation}\label{RS}
\operatorname{Ric}_{g} + \frac{1}{2} \mathcal{L}_{X} g = \lambda g
\end{equation}
for some real constant $\lambda \in \mathbb{R}$, where $\operatorname{Ric}_{g}$ denotes the Ricci tensor of $(M, g)$ and $\mathcal{L}_{X}$ is the Lie derivative in the direction of $X$. The soliton $(M, g)$ is said to be \textit{shrinking}, \textit{steady} and \textit{expanding} if $\lambda > 0, \lambda = 0$ and $\lambda < 0$, respectively. Typical examples of the Ricci soliton are Einstein manifolds, where $X$ is given by a Killing vector field. In this case, we say that the soliton is \textit{trivial}. Ricci solitons play an important role in the Ricci flow as they correspond to self-similar solutions and often arise as singularity models \cite{Cao2}. When $X$ may be replaced by the gradient $\nabla f$ for some smooth function $f : M \rightarrow \mathbb{R}$, called a \textit{potential function}, $(M, g)$ is called a \textit{gradient Ricci soliton}. In such a case, (\ref{RS}) becomes
\begin{equation}\label{GRS}
\operatorname{Ric}_{g} + H_{f} = \lambda g, 
\end{equation}
where $H_{f}$ denotes the Hessian of the function $f$. Due to Perelman \cite{P}, any compact Ricci soliton is a gradient one. It is well-known \cite{Cao2} that compact steady and expanding Ricci solitons must be trivial, as well as compact shrinking Ricci solitons in dimension two and three \cite{Cao2}. Examples of non-trivial compact K\"{a}hler-Ricci solitons were constructed by Koiso \cite{K}, Cao \cite{Cao1} and Wang and Zhu \cite{W-Z}.

A lower diameter bound for compact shrinking Ricci solitons has been recently investigated by many authors \cite{Andrews-Ni, Chu-Hu, FL-GR1, Futaki-Li-Li, Futaki-Sano}. In particular, a universal lower bound for compact shrinking Ricci solitons was first given by Futaki and Sano \cite{Futaki-Sano} in relation to study of the first non-zero eigenvalue of the Witten-Laplacian. On the other hand, Fern\'{a}ndez-L\'{o}pez and Garc\'{i}a-R\'{i}o \cite{FL-GR1} gave the following lower diameter bound in terms of the Ricci curvature and the range of the potential function.

\begin{thm}[Fern\'{a}ndez-L\'{o}pez and Garc\'{i}a-R\'{i}o \cite{FL-GR1}]\label{thm-intro-1}
Let $(M, g)$ be an $n$-dimensional compact connected shrinking Ricci soliton satisfying {\rm (\ref{GRS})}. Then
\[
\operatorname{diam}(M, g) \geqslant \max \left \{ \sqrt{\frac{2(f_{\mathrm{max}} - f_{\mathrm{min}})}{C - \lambda}}, \sqrt{\frac{2(f_{\mathrm{max}} - f_{\mathrm{min}})}{\lambda - c}}, 2 \sqrt{\frac{2(f_{\mathrm{max}} - f_{\mathrm{min}})}{C - c}} \right \}, 
\]
where $f_{\mathrm{max}}$ and $f_{\mathrm{min}}$ respectively denote the maximum and the minimum value of the potential function on the soliton.
\end{thm}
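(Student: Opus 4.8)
The plan is to study the restriction of the potential function $f$ to a minimizing geodesic joining the two points at which $f$ attains its extreme values, and to use the soliton equation (\ref{GRS}) to convert bounds on the Ricci curvature into second-order control on that restriction. Write $c$ and $C$ for the infimum and supremum, over unit tangent vectors, of the Ricci curvature, so that $c\,g \leqslant \operatorname{Ric}_{g} \leqslant C\,g$ on $M$. First I would fix $p, q \in M$ with $f(p) = f_{\mathrm{min}}$ and $f(q) = f_{\mathrm{max}}$, together with a unit-speed minimizing geodesic $\gamma : [0, \ell] \to M$ from $p$ to $q$, where $\ell = d(p, q) \leqslant \operatorname{diam}(M, g)$. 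Since $p$ and $q$ are critical points of $f$, the smooth function $\varphi(t) := f(\gamma(t))$ satisfies $\varphi'(0) = \varphi'(\ell) = 0$; and since $\gamma$ is a geodesic, differentiating twice and invoking (\ref{GRS}) gives
\[
\varphi''(t) = H_{f}(\gamma'(t), \gamma'(t)) = \lambda - \operatorname{Ric}_{g}(\gamma'(t), \gamma'(t)), \qquad \text{hence} \qquad \lambda - C \leqslant \varphi''(t) \leqslant \lambda - c .
\]
(Incidentally, evaluating (\ref{GRS}) at $q$, where $H_{f} \leqslant 0$, forces $\operatorname{Ric}_{g} \geqslant \lambda g$ there, so $\lambda \leqslant C$; likewise $c \leqslant \lambda$ from the minimum point $p$, and thus all three radicands are automatically nonnegative.)

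Next I would extract the three estimates by elementary integration of the above differential inequality. Integrating $\varphi''$ forward from $0$ and using $\varphi'(0) = 0$ yields $\varphi'(t) \leqslant (\lambda - c)t$, whence $f_{\mathrm{max}} - f_{\mathrm{min}} = \int_{0}^{\ell} \varphi'(t)\, dt \leqslant \tfrac{1}{2}(\lambda - c)\ell^{2}$, which rearranges to the second term in the maximum. Integrating backward from $\ell$ and using $\varphi'(\ell) = 0$ yields $\varphi'(t) \leqslant (C - \lambda)(\ell - t)$, and in the same way $f_{\mathrm{max}} - f_{\mathrm{min}} \leqslant \tfrac{1}{2}(C - \lambda)\ell^{2}$, giving the first term. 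For the third, sharpest, term I would split at the midpoint: the forward inequality gives $\varphi(\ell/2) - \varphi(0) \leqslant \tfrac{1}{8}(\lambda - c)\ell^{2}$ and the backward one gives $\varphi(\ell) - \varphi(\ell/2) \leqslant \tfrac{1}{8}(C - \lambda)\ell^{2}$, and adding these produces $f_{\mathrm{max}} - f_{\mathrm{min}} \leqslant \tfrac{1}{8}(C - c)\ell^{2}$, i.e. $\ell \geqslant 2\sqrt{2(f_{\mathrm{max}} - f_{\mathrm{min}})/(C - c)}$. Since $\operatorname{diam}(M, g) \geqslant \ell$ throughout, taking the maximum of the three lower bounds for $\ell$ completes the argument.

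I do not expect a serious obstacle here: the proof is essentially a one-variable calculus argument once the soliton equation has been used along $\gamma$. The two points needing care are the observation that the extreme points of $f$ are critical (which is what makes $\varphi'$ vanish at both endpoints of $\gamma$, and is essentially the only place where compactness enters beyond the existence of a minimizing geodesic), and the midpoint splitting, which is precisely what yields the extra factor of $2$ in the third estimate rather than the weaker bound $\sqrt{2(f_{\mathrm{max}} - f_{\mathrm{min}})/(C - c)}$ that one would obtain by combining the first two naively. It is also worth recording that the statement becomes vacuous exactly when $c = \lambda = C$, that is, on trivial Einstein solitons, consistent with the fact that the argument measures the oscillation of the potential function.
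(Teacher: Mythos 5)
Your argument is correct: the Hessian bounds $\lambda - C \leqslant \varphi'' \leqslant \lambda - c$ along a minimizing geodesic joining the extrema of $f$, the vanishing of $\varphi'$ at both endpoints, and the forward, backward, and midpoint-split integrations each check out and yield exactly the three stated lower bounds. The paper itself only quotes this theorem from Fern\'{a}ndez-L\'{o}pez and Garc\'{i}a-R\'{i}o without proof, but your argument is precisely the standard one from that reference, so there is nothing further to compare.
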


In Theorem \ref{thm-intro-1}, the number
\[
C : = \max_{v \in TM} \{ \operatorname{Ric}_{g}(v, v) : | v | = 1 \} \quad \mbox{and} \quad c : = \min_{v \in TM} \{ \operatorname{Ric}_{g}(v, v) : | v | = 1 \}
\]
respectively denote the maximum and the minimum value of the Ricci curvature on the unit sphere bundle of $(M, g)$. Note that $cg \leqslant \operatorname{Ric}_{g} \leqslant Cg$.

When the soliton has positive Ricci curvature, this diameter bound can be written in terms of the range of the scalar curvature as follows.

\begin{cor}[Fern\'{a}ndez-L\'{o}pez and Garc\'{i}a-R\'{i}o \cite{FL-GR1}]\label{cor-intro-2}
Let $(M, g)$ be an $n$-dimensional compact connected shrinking Ricci soliton with positive Ricci curvature satisfying {\rm (\ref{GRS})}. Then
\[
\operatorname{diam}(M, g) \geqslant \max \left \{ \sqrt{\frac{R_{\mathrm{max}} - R_{\mathrm{min}}}{\lambda(C - \lambda)}}, \sqrt{\frac{R_{\mathrm{max}} - R_{\mathrm{min}}}{\lambda(\lambda - c)}}, 2 \sqrt{\frac{R_{\mathrm{max}} - R_{\mathrm{min}}}{\lambda(C - c)}} \right \}, 
\]
where $R_{\mathrm{max}}$ and $R_{\mathrm{min}}$ respectively denote the maximum and the minimum value of the scalar curvature on the soliton.
\end{cor}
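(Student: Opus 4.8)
The plan is to deduce Corollary \ref{cor-intro-2} from Theorem \ref{thm-intro-1} by showing that, under the positivity assumption on the Ricci curvature, the oscillation of the potential function is completely controlled by the oscillation of the scalar curvature through the identity
\[
f_{\mathrm{max}} - f_{\mathrm{min}} = \frac{R_{\mathrm{max}} - R_{\mathrm{min}}}{2\lambda}.
\]
Once this is available, one merely substitutes it into the three lower bounds of Theorem \ref{thm-intro-1}: the factor $2(f_{\mathrm{max}} - f_{\mathrm{min}})$ becomes $(R_{\mathrm{max}} - R_{\mathrm{min}})/\lambda$, so the three quantities turn into $\sqrt{(R_{\mathrm{max}} - R_{\mathrm{min}})/(\lambda(C - \lambda))}$, $\sqrt{(R_{\mathrm{max}} - R_{\mathrm{min}})/(\lambda(\lambda - c))}$ and $2\sqrt{(R_{\mathrm{max}} - R_{\mathrm{min}})/(\lambda(C - c))}$, which is precisely the asserted estimate. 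Strictly speaking only the inequality $f_{\mathrm{max}} - f_{\mathrm{min}} \geqslant (R_{\mathrm{max}} - R_{\mathrm{min}})/(2\lambda)$ is needed, but the equality holds and is proved with no extra effort.

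To establish the identity I would rely on two standard facts about a compact gradient Ricci soliton satisfying (\ref{GRS}): the contracted second Bianchi identity, which gives $\nabla R = 2\operatorname{Ric}_g(\nabla f)$ with $\operatorname{Ric}_g$ regarded as an endomorphism, and Hamilton's identity $R + |\nabla f|^2 - 2\lambda f = \mathrm{const}$. The first identity together with the hypothesis $\operatorname{Ric}_g > 0$ — which makes $\operatorname{Ric}_g$ pointwise positive definite — forces $\nabla R(p) = 0$ to imply $\nabla f(p) = 0$, so every critical point of $R$ is a critical point of $f$; conversely $\nabla R = 2\operatorname{Ric}_g(\nabla f)$ shows every critical point of $f$ is a critical point of $R$. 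Since $M$ is compact, $R$ attains $R_{\mathrm{max}}$ at some $p$ and $R_{\mathrm{min}}$ at some $q$, and $f$ attains $f_{\mathrm{max}}$ at some $a$ and $f_{\mathrm{min}}$ at some $b$; each of $p, q, a, b$ is a critical point of both $R$ and $f$, so $|\nabla f|$ vanishes there. Evaluating Hamilton's identity at these points yields $R_{\mathrm{max}} = 2\lambda f(p) + \mathrm{const}$, $R_{\mathrm{min}} = 2\lambda f(q) + \mathrm{const}$, $R(a) = 2\lambda f_{\mathrm{max}} + \mathrm{const}$ and $R(b) = 2\lambda f_{\mathrm{min}} + \mathrm{const}$.

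Comparing these, and using $\lambda > 0$ together with $R(a) \leqslant R_{\mathrm{max}}$ and $f(p) \leqslant f_{\mathrm{max}}$, one gets $f(p) = f_{\mathrm{max}}$, hence $R_{\mathrm{max}} = 2\lambda f_{\mathrm{max}} + \mathrm{const}$; symmetrically $R_{\mathrm{min}} = 2\lambda f_{\mathrm{min}} + \mathrm{const}$. Subtracting gives the displayed identity, and the corollary follows by the substitution described above. The main point to be careful with is the reduction of the global extrema of $R$ to critical points of $f$, which is exactly where the positivity of the Ricci curvature is used; the degenerate cases $C = \lambda$, $\lambda = c$ or $C = c$ are to be interpreted as in Theorem \ref{thm-intro-1}.
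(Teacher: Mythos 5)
Your proposal is correct and follows essentially the same route as the paper: the identity $R_{\mathrm{max}} - R_{\mathrm{min}} = 2\lambda(f_{\mathrm{max}} - f_{\mathrm{min}})$ is exactly equation (\ref{Main-Corollary-eq}), which the paper also derives from Hamilton's identity $R + |\nabla f|^{2} - 2\lambda f = \mathrm{const}$ together with $\operatorname{Ric}_{g}(\nabla f, \cdot) = \tfrac{1}{2}dR$ and the positivity of the Ricci curvature to force $\nabla f$ to vanish at the extrema of $R$. Substituting this identity into Theorem \ref{thm-intro-1} then gives the stated bounds, just as you describe.
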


Moreover, stimulated by the Myers diameter estimate \cite[Theorem 1.4]{W-W} via Bakry-\'{E}mery Ricci curvature, Fern\'{a}ndez-L\'{o}pez and Garc\'{i}a-R\'{i}o \cite{FL-GR1} mentioned that an upper diameter bound for compact shrinking Ricci solitons would be given in terms of the range of the potential function, as well as in terms of the range of the scalar curvature.

The aim of this article is to give a positive answer to this conjecture by giving the following.

\begin{thm}\label{Main-Theorem}
Let $(M, g)$ be an $n$-dimensional compact connected shrinking Ricci soliton satisfying {\rm (\ref{GRS})}. Then
\begin{equation}\label{diam-Main-Theorem}
\operatorname{diam}(M, g) \leqslant \frac{1}{\lambda} \left( 2 \sqrt{R_{\mathrm{max}} - R_{\mathrm{min}}} + \sqrt{4(R_{\mathrm{max}} - R_{\mathrm{min}}) + (n - 1) \lambda \pi^{2}} \right).
\end{equation}
\end{thm}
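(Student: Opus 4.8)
The plan is to exploit the well-known identities for compact shrinking gradient Ricci solitons together with an index-form (second-variation) argument along a minimizing geodesic. First I would recall the standard identities: tracing (\ref{GRS}) gives $R + \Delta f = n\lambda$, and the contracted second Bianchi identity yields $R + |\nabla f|^2 - 2\lambda f = \text{const}$. Normalizing $f$ so that this constant is chosen conveniently (or working with $\mu := f - \text{const}/(2\lambda)$), one obtains the fundamental relation $|\nabla f|^2 = 2\lambda f - R + (\text{const})$, which bounds $|\nabla f|^2$ in terms of the range of $f$ and the range of $R$. Next, following Fern\'andez-L\'opez and Garc\'ia-R\'io, I would control the range of $f$ by the range of $R$: integrating or using the maximum principle on $R + \Delta f = n\lambda$ together with the soliton equation (in particular evaluating the Bianchi identity at the max and min points of $f$, where $\nabla f = 0$) gives $2\lambda(f_{\max} - f_{\min}) = R_{\max}' - R_{\min}'$ evaluated appropriately, hence $f_{\max} - f_{\min} \leqslant (R_{\max} - R_{\min})/(2\lambda)$ up to the precise bookkeeping, and consequently $\sup_M |\nabla f| \leqslant \sqrt{2(f_{\max}-f_{\min})\lambda} \lesssim \sqrt{R_{\max}-R_{\min}}$.

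For the diameter bound itself I would let $\gamma : [0, L] \to M$ be a unit-speed minimizing geodesic with $L = \operatorname{diam}(M,g)$, and plug a suitable test vector field into the index form $I(V,V) = \int_0^L \big( |\nabla_{\dot\gamma} V|^2 - \langle R(V,\dot\gamma)\dot\gamma, V\rangle \big)\,dt \geqslant 0$. Summing over an orthonormal parallel frame $\{E_i\}_{i=1}^{n-1}$ orthogonal to $\dot\gamma$ and choosing $V_i = \phi(t) E_i$ with $\phi(t) = \sin(\pi t/L)$ gives the classical estimate $\int_0^L \phi^2 \operatorname{Ric}(\dot\gamma,\dot\gamma)\,dt \leqslant (n-1)\int_0^L (\phi')^2\,dt = (n-1)\pi^2/L$. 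Now I substitute the soliton equation $\operatorname{Ric}(\dot\gamma,\dot\gamma) = \lambda - H_f(\dot\gamma,\dot\gamma) = \lambda - \frac{d^2}{dt^2}(f\circ\gamma)$ into the left-hand side and integrate by parts twice. The boundary terms and the remaining integral $\int_0^L \phi'' (f\circ\gamma)\,dt = -(\pi/L)^2 \int_0^L \phi\, (f\circ\gamma)\,dt$ are controlled using $|f\circ\gamma| \leqslant \tfrac12(f_{\max}-f_{\min})$ (after recentering) and $|(f\circ\gamma)'| \leqslant |\nabla f| \leqslant \sup_M|\nabla f|$; the first-derivative boundary contribution $[\phi'(f\circ\gamma)]_0^L$ and the term $\int \phi'(f\circ\gamma)'\,dt$ give the dominant $\sqrt{R_{\max}-R_{\min}}$ contributions.

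Collecting everything produces an inequality of the shape $\lambda L - a\sqrt{R_{\max}-R_{\min}} - b\,(R_{\max}-R_{\min})/L \leqslant (n-1)\pi^2/L$ for explicit constants; multiplying through by $L$ gives a quadratic inequality in $L$, namely $\lambda L^2 - a\sqrt{R_{\max}-R_{\min}}\, L - \big(b(R_{\max}-R_{\min}) + (n-1)\pi^2\big) \leqslant 0$, and solving this quadratic with the correct constants ($a = 4$, $b = 4$ after the dust settles) yields exactly (\ref{diam-Main-Theorem}). The main obstacle I anticipate is getting the numerical constants right: one must track carefully how $f_{\max}-f_{\min}$ relates to $R_{\max}-R_{\min}$, how the integration-by-parts boundary terms combine with the interior terms, and which choice of test function $\phi$ (and whether to center $f$) makes the quadratic come out with the clean coefficients $2$ and $4$ appearing in the statement. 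A secondary subtlety is ensuring the estimate $\sup_M|\nabla f| \leqslant \sqrt{2\lambda(f_{\max}-f_{\min})}$ — or whatever sharp form is needed — which again relies on the Bianchi-type identity and evaluating it at critical points of $f$.
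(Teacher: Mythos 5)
Your proposal follows essentially the same route as the paper: a second--variation (index form) estimate along a minimizing unit-speed geodesic with test fields $\sin(\pi t/\ell)\,e_i$, combined with a pointwise bound on $|\nabla f|$ coming from Hamilton's identity $R+|\nabla f|^{2}-2\lambda f=\mathrm{const}$. The paper merely packages the geodesic computation as a general Myers-type theorem for $\operatorname{Ric}_{g}+\mathcal{L}_{V}g\geqslant (n-1)Cg$ with $|V|\leqslant\gamma$ and then specializes $V=\tfrac{1}{2}\nabla f$, which is the same calculation you describe.

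Two bookkeeping points, since you flagged the constants as your main worry. First, the gradient bound is obtained directly as $|\nabla f|^{2}\leqslant R_{\mathrm{max}}-R$ by evaluating Hamilton's identity at a maximum point of $f$ (where $\nabla f=0$, and which is then automatically a maximum point of $R$); there is no need to detour through $f_{\mathrm{max}}-f_{\mathrm{min}}$, and in particular no positive Ricci curvature assumption is needed for this step (that hypothesis is only used in Corollary \ref{Main-Corollary} to control the \emph{minimum} of $f$). Second, you should integrate by parts only \emph{once}: the boundary term vanishes since $\phi(0)=\phi(\ell)=0$, and the remaining term $-2\int\phi\dot\phi\,(f\circ\gamma)'\,dt$ is bounded using $|(f\circ\gamma)'|\leqslant|\nabla f|\leqslant\sqrt{R_{\mathrm{max}}-R_{\mathrm{min}}}$ and $\int_{0}^{\ell}|\phi\dot\phi|\,dt=1$, giving the quadratic $\lambda\ell^{2}-4\sqrt{R_{\mathrm{max}}-R_{\mathrm{min}}}\,\ell-(n-1)\pi^{2}\leqslant 0$, i.e.\ $a=4$ and $b=0$, which solves to exactly (\ref{diam-Main-Theorem}). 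Your proposed second integration by parts with $b=4$ would leave an extra $4\lambda(R_{\mathrm{max}}-R_{\mathrm{min}})$ under the square root and would not reproduce the stated bound.
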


When the soliton has positive Ricci curvature, this diameter bound can be written in terms of the range of the potential function as follows.

\begin{cor}\label{Main-Corollary}
Let $(M, g)$ be an $n$-dimensional compact connected shrinking Ricci soliton with positive Ricci curvature satisfying {\rm (\ref{GRS})}. Then
\[
\operatorname{diam}(M, g) \leqslant 2 \sqrt{\frac{2(f_{\mathrm{max}} - f_{\mathrm{min}})}{\lambda}} + \sqrt{\frac{8(f_{\mathrm{max}} - f_{\mathrm{min}}) + (n - 1) \pi^{2}}{\lambda}}.
\]
\end{cor}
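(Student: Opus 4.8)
The plan is to deduce Corollary \ref{Main-Corollary} directly from Theorem \ref{Main-Theorem}, the only new ingredient being a bound on the oscillation $R_{\mathrm{max}} - R_{\mathrm{min}}$ of the scalar curvature in terms of the oscillation $f_{\mathrm{max}} - f_{\mathrm{min}}$ of the potential function. Concretely, I would prove that under the hypotheses of the corollary one has
\[
R_{\mathrm{max}} - R_{\mathrm{min}} \leqslant 2\lambda (f_{\mathrm{max}} - f_{\mathrm{min}}),
\]
and then substitute this into \eqref{diam-Main-Theorem}.

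To establish this estimate I would first recall two classical identities available on any gradient Ricci soliton satisfying \eqref{GRS}. Tracing \eqref{GRS} gives $R + \Delta f = n\lambda$, and combining the contracted second Bianchi identity $\operatorname{div}\operatorname{Ric}_{g} = \tfrac12\, dR$ with the commutation formula $\operatorname{div} H_{f} = d(\Delta f) + \operatorname{Ric}_{g}(\nabla f, \cdot)$ and taking the divergence of \eqref{GRS} yields $\nabla R = 2\operatorname{Ric}_{g}(\nabla f, \cdot)$. Differentiating $R + |\nabla f|^{2}$ and using \eqref{GRS} together with this last identity shows that $R + |\nabla f|^{2} - 2\lambda f$ is constant on $M$ (Hamilton's identity); call this constant $\mu$.

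Now I would use compactness and the positivity of the Ricci curvature. Since $M$ is compact, $R$ attains its maximum at some point $p$ and its minimum at some point $q$, and $\nabla R$ vanishes at both. Because $\nabla R = 2\operatorname{Ric}_{g}(\nabla f,\cdot)$ and $\operatorname{Ric}_{g}$ is positive definite, it follows that $\nabla f(p) = \nabla f(q) = 0$. Evaluating Hamilton's identity at $p$ and at $q$ and subtracting gives $R_{\mathrm{max}} - R_{\mathrm{min}} = 2\lambda\bigl(f(p) - f(q)\bigr) \leqslant 2\lambda (f_{\mathrm{max}} - f_{\mathrm{min}})$, where $\lambda > 0$ is used in the last step. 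This is the one genuinely non-routine point of the argument: positivity of $\operatorname{Ric}_{g}$ is exactly what forces the critical points of $R$ to be critical points of $f$.

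Finally, I would plug this bound into Theorem \ref{Main-Theorem}. The right-hand side of \eqref{diam-Main-Theorem} is a non-decreasing function of the nonnegative quantity $R_{\mathrm{max}} - R_{\mathrm{min}}$, so replacing $R_{\mathrm{max}} - R_{\mathrm{min}}$ by $2\lambda(f_{\mathrm{max}} - f_{\mathrm{min}})$ can only increase it, giving
\[
\operatorname{diam}(M, g) \leqslant \frac{1}{\lambda}\left( 2\sqrt{2\lambda(f_{\mathrm{max}} - f_{\mathrm{min}})} + \sqrt{8\lambda(f_{\mathrm{max}} - f_{\mathrm{min}}) + (n-1)\lambda\pi^{2}} \right),
\]
and factoring $\sqrt{\lambda}$ out of each square root yields precisely the stated inequality. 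The remaining verifications (the two soliton identities and the elementary algebra) are routine.
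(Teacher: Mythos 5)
Your proof is correct and follows essentially the same route as the paper: both use Hamilton's identity $R + |\nabla f|^{2} - 2\lambda f = \mathrm{const}$ together with $dR = 2\operatorname{Ric}_{g}(\nabla f, \cdot)$ and the positivity of the Ricci curvature at critical points of $R$ to relate the oscillation of $R$ to that of $f$, and then substitute into Theorem \ref{Main-Theorem}. The only (harmless) difference is that you settle for the inequality $R_{\mathrm{max}} - R_{\mathrm{min}} \leqslant 2\lambda (f_{\mathrm{max}} - f_{\mathrm{min}})$ evaluated at the extremal points of $R$, whereas the paper shows these points are in fact extremal points of $f$ as well and obtains the exact equality $R_{\mathrm{max}} - R_{\mathrm{min}} = 2\lambda (f_{\mathrm{max}} - f_{\mathrm{min}})$.
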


Just as in the Einstein manifold, we may expect some topological obstruction to the existence of compact Ricci solitons. A validity of the Hitchin-Thorpe inequality for four-dimensional compact shrinking Ricci solitons was first shown by Ma \cite{Ma} assuming some upper bounds on the $L^{2}$-norm of the scalar curvature. On the other hand, Fern\'{a}ndez-L\'{o}pez and Garc\'{i}a-R\'{i}o \cite{FL-GR2} investigated the same validity assuming the following upper diameter bounds in terms of the Ricci curvature.

\begin{thm}[Fern\'{a}ndez-L\'{o}pez and Garc\'{i}a-R\'{i}o \cite{FL-GR1}]
Let $(M, g)$ be a four-dimensional compact connected shrinking Ricci soliton satisfying {\rm (\ref{GRS})}. If
\[
\operatorname{diam}(M, g) \leqslant \max \left \{ \sqrt{\frac{2}{C - \lambda}}, \sqrt{\frac{2}{\lambda - c}}, 2 \sqrt{\frac{2}{C - c}} \right \}, 
\]
then the soliton satisfies the Hitchin-Thorpe inequality $2 \chi(M) \geqslant 3 | \tau(M) |$.
\end{thm}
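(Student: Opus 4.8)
The plan is to feed the hypothesis into the lower diameter bound of Theorem \ref{thm-intro-1} to extract the sharp pointwise bound $f_{\mathrm{max}} - f_{\mathrm{min}} \leqslant 1$, and then to control the $L^{2}$-norm of the scalar curvature tightly enough to close the Gauss--Bonnet and Hirzebruch signature formulas. We may assume the soliton is non-trivial (for Einstein manifolds the Hitchin--Thorpe inequality is classical), so that $f$ is non-constant; applying the divergence theorem to the trace identity $\Delta f = n\lambda - R$ then rules out both $C \leqslant \lambda$ and $\lambda \leqslant c$, since either would make $\Delta f$ one-signed, hence $\Delta f \equiv 0$ and $f$ constant. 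Thus $c < \lambda < C$ and all three denominators in the hypothesis are positive. Writing $D := \max \{ \sqrt{2/(C - \lambda)},\, \sqrt{2/(\lambda - c)},\, 2\sqrt{2/(C - c)} \}$, Theorem \ref{thm-intro-1} reads $\operatorname{diam}(M, g) \geqslant \sqrt{f_{\mathrm{max}} - f_{\mathrm{min}}}\, D$ while the hypothesis reads $\operatorname{diam}(M, g) \leqslant D$; since $D > 0$, this forces $f_{\mathrm{max}} - f_{\mathrm{min}} \leqslant 1$.

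The second step converts this into an estimate for the scalar curvature. By Hamilton's identity $R + |\nabla f|^{2} - 2\lambda f \equiv \mathrm{const}$, evaluated at a point $p$ where $f$ attains its minimum (so $\nabla f(p) = 0$), one gets $R \leqslant 2\lambda f + R(p) - 2\lambda f_{\mathrm{min}} \leqslant R(p) + 2\lambda(f_{\mathrm{max}} - f_{\mathrm{min}})$ everywhere on $M$; and since $\nabla^{2} f(p) \geqslant 0$ at a minimum, the trace identity gives $R(p) \leqslant n\lambda$. Hence $R_{\mathrm{max}} \leqslant n\lambda + 2\lambda(f_{\mathrm{max}} - f_{\mathrm{min}})$, which for $n = 4$ and $f_{\mathrm{max}} - f_{\mathrm{min}} \leqslant 1$ becomes $R_{\mathrm{max}} \leqslant 6\lambda$. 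Integrating the trace identity gives $\int_{M} R\, dV_{g} = n\lambda \operatorname{Vol}(M, g)$, and recalling that $R > 0$ on a compact shrinking soliton,
\[
\int_{M} R^{2}\, dV_{g} \leqslant R_{\mathrm{max}} \int_{M} R\, dV_{g} \leqslant 6\lambda \cdot 4\lambda \operatorname{Vol}(M, g) = 24\lambda^{2} \operatorname{Vol}(M, g).
\]

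Finally I would invoke the integrated Bochner identity for compact shrinking solitons: starting from $\Delta R - \langle \nabla R, \nabla f \rangle = 2\lambda R - 2|\operatorname{Ric}_{g}|^{2}$ and integrating over $M$, using $\int_{M} \langle \nabla R, \nabla f \rangle\, dV_{g} = -\int_{M} R\, \Delta f\, dV_{g}$ together with $\int_{M} R\, dV_{g} = n\lambda \operatorname{Vol}(M, g)$, one obtains $\int_{M} |\operatorname{Ric}_{g}|^{2}\, dV_{g} = \tfrac{1}{2}\int_{M} R^{2}\, dV_{g} - \tfrac{n(n - 2)}{2}\lambda^{2} \operatorname{Vol}(M, g)$. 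Introducing the trace-free Ricci tensor $E := \operatorname{Ric}_{g} - \tfrac{R}{n} g$, so that $|E|^{2} = |\operatorname{Ric}_{g}|^{2} - \tfrac{1}{n} R^{2}$, this becomes, in dimension four, $\int_{M} |E|^{2}\, dV_{g} = \tfrac{1}{4}\int_{M} R^{2}\, dV_{g} - 4\lambda^{2}\operatorname{Vol}(M, g)$; hence the inequality $\int_{M} |E|^{2}\, dV_{g} \leqslant \tfrac{1}{12}\int_{M} R^{2}\, dV_{g}$ is equivalent to $\int_{M} R^{2}\, dV_{g} \leqslant 24\lambda^{2}\operatorname{Vol}(M, g)$, which was just proved. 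Combining the Gauss--Bonnet and signature formulas, for either orientation of $M$,
\[
2\chi(M) \pm 3\tau(M) = \frac{1}{4\pi^{2}} \int_{M} \left( 2|W^{\pm}|^{2} + \frac{R^{2}}{24} - \frac{|E|^{2}}{2} \right) dV_{g} \;\geqslant\; \frac{1}{4\pi^{2}} \int_{M} \left( \frac{R^{2}}{24} - \frac{|E|^{2}}{2} \right) dV_{g} \;\geqslant\; 0,
\]
which is exactly $2\chi(M) \geqslant 3|\tau(M)|$. I expect the crux to be the opening reduction --- noticing that the hypothesis is precisely the bound of Theorem \ref{thm-intro-1} divided by $\sqrt{f_{\mathrm{max}} - f_{\mathrm{min}}}$ --- after which there is essentially no slack: the scalar curvature must be pushed to the exact constant $24\lambda^{2}\operatorname{Vol}(M, g)$, forcing the minimum-point evaluation in step two and the exact integrated Bochner identity in step three rather than any cruder estimates.
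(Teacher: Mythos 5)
Your proof is correct. Note that the paper itself gives no proof of this statement --- it is quoted from Fern\'{a}ndez-L\'{o}pez and Garc\'{i}a-R\'{i}o --- so the natural comparison is with the paper's parallel argument in Section 3, where the analogous Corollary \ref{Cor-1} is proved by forcing $R_{\mathrm{max}} < 6\lambda$ and then invoking Ma's $L^{2}$ criterion (Theorem \ref{Ma}) as a black box. You reach the same pivot $R_{\mathrm{max}} \leqslant 6\lambda$ by a genuinely different and self-contained route: dividing the hypothesis against the lower bound of Theorem \ref{thm-intro-1} to get $f_{\mathrm{max}} - f_{\mathrm{min}} \leqslant 1$ (your preliminary reduction to the non-trivial case, which guarantees $c < \lambda < C$ and hence that the three radicands are positive and the common factor $\sqrt{f_{\mathrm{max}} - f_{\mathrm{min}}}$ can be cancelled, is exactly what is needed to make this step legitimate), and then using Hamilton's identity (\ref{fund-eq2}) at a minimum point of $f$ together with $\Delta f(p) \geqslant 0$ and the trace identity to get $R_{\mathrm{max}} \leqslant n\lambda + 2\lambda(f_{\mathrm{max}} - f_{\mathrm{min}}) \leqslant 6\lambda$. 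Combined with $\int_{M} R = 4\lambda\,\mathrm{vol}(M, g)$ and $R > 0$ on compact shrinkers, this yields $\int_{M} R^{2} \leqslant 24\lambda^{2}\mathrm{vol}(M, g)$, which is precisely the hypothesis of Theorem \ref{Ma}; your final step then re-proves Ma's theorem itself via the integrated identity $\Delta R - \langle \nabla R, \nabla f\rangle = 2\lambda R - 2|\operatorname{Ric}_{g}|^{2}$ and the Gauss--Bonnet and signature formulas, and the computations check out ($\int_{M}|E|^{2} = \tfrac{1}{4}\int_{M}R^{2} - 4\lambda^{2}\mathrm{vol}$ in dimension four, so $\int_{M}|E|^{2} \leqslant \tfrac{1}{12}\int_{M}R^{2}$ is equivalent to the $24\lambda^{2}$ bound). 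What your approach buys is complete self-containment and a clean conceptual chain (diameter $\Rightarrow$ oscillation of $f$ $\Rightarrow$ bound on $R_{\mathrm{max}}$ $\Rightarrow$ $L^{2}$ bound $\Rightarrow$ Hitchin--Thorpe); what it costs is only the redundancy of re-deriving Theorem \ref{Ma}, at which point you could have stopped and cited it as the paper does.
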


The following corollary of Theorem \ref{Main-Theorem} provides a sufficient condition for four-dimensional compact shrinking Ricci solitons to satisfy the Hitchin-Thorpe inequality.

\begin{cor}\label{Cor-1}
Let $(M, g)$ be a four-dimensional compact connected shrinking Ricci soliton satisfying {\rm (\ref{GRS})}. If
\begin{equation}\label{diam-Cor-1}
\sqrt{\frac{R_{\mathrm{max}} - R_{\mathrm{min}}}{\lambda^{2}}(16 + 6 \pi^{2})} \leqslant \operatorname{diam}(M, g), 
\end{equation}
then the soliton satisfies the Hitchin-Thorpe inequality $2 \chi(M) \geqslant 3 | \tau(M) |$.
\end{cor}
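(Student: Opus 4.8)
The plan is to reduce the Hitchin-Thorpe inequality to an explicit upper bound on the range $R_{\mathrm{max}} - R_{\mathrm{min}}$ of the scalar curvature in terms of $\lambda$, and then to obtain that bound by playing the hypothesis (\ref{diam-Cor-1}) against Theorem \ref{Main-Theorem}. The starting point is the Gauss-Bonnet and signature formulas: on a four-dimensional closed oriented Riemannian manifold one has
\[
2 \chi(M) \pm 3 \tau(M) = \frac{1}{4 \pi^{2}} \int_{M} \left( \frac{R^{2}}{24} + 2 | W^{\pm} |^{2} - \frac{1}{2} \Big| \operatorname{Ric}_{g} - \frac{R}{4} g \Big|^{2} \right) dV ,
\]
where $W^{\pm}$ are the self-dual and anti-self-dual parts of the Weyl tensor. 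Discarding the nonnegative Weyl term, the inequality $2 \chi(M) \geqslant 3 | \tau(M) |$ holds as soon as
\[
12 \int_{M} \Big| \operatorname{Ric}_{g} - \frac{R}{4} g \Big|^{2} dV \leqslant \int_{M} R^{2}\, dV .
\]

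The heart of the argument, and the step I expect to be the main obstacle, is to evaluate the left-hand side on a compact soliton. By Perelman's theorem the soliton is a gradient one, so (\ref{GRS}) holds, and I would use the standard soliton identities $\Delta f = 4 \lambda - R$ and $\nabla R = 2 \operatorname{Ric}_{g}(\nabla f, \cdot)$, together with $\int_{M} R\, dV = 4 \lambda \operatorname{Vol}(M, g)$ (integrate $R + \Delta f = 4 \lambda$ over $M$). Integrating the Bochner formula for $| \nabla f |^{2}$ over $M$ and combining it with (\ref{GRS}) in the form $\operatorname{Ric}_{g} = \lambda g - H_{f}$ and with the identities above, I expect to arrive at
\[
\int_{M} \Big| \operatorname{Ric}_{g} - \frac{R}{4} g \Big|^{2} dV = \frac{1}{4} \int_{M} R^{2}\, dV - 4 \lambda^{2} \operatorname{Vol}(M, g) .
\]

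With this identity the criterion of the first paragraph becomes simply $\int_{M} R^{2}\, dV \leqslant 24 \lambda^{2} \operatorname{Vol}(M, g)$. Since the mean value of $R$ on $(M, g)$ equals $4 \lambda$, one has $R_{\mathrm{min}} \leqslant 4 \lambda \leqslant R_{\mathrm{max}}$, hence $| R - 4 \lambda | \leqslant R_{\mathrm{max}} - R_{\mathrm{min}}$ at every point; writing $\int_{M} R^{2}\, dV = \int_{M} (R - 4 \lambda)^{2}\, dV + 16 \lambda^{2} \operatorname{Vol}(M, g)$ then shows that the Hitchin-Thorpe inequality holds whenever $(R_{\mathrm{max}} - R_{\mathrm{min}})^{2} \leqslant 8 \lambda^{2}$, that is, whenever $R_{\mathrm{max}} - R_{\mathrm{min}} \leqslant 2 \sqrt{2}\, \lambda$.

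It remains to extract this pinching from the hypotheses. Put $\delta := R_{\mathrm{max}} - R_{\mathrm{min}}$. Theorem \ref{Main-Theorem} with $n = 4$ gives $\lambda \operatorname{diam}(M, g) \leqslant 2 \sqrt{\delta} + \sqrt{4 \delta + 3 \lambda \pi^{2}}$, while (\ref{diam-Cor-1}) gives $\lambda \operatorname{diam}(M, g) \geqslant \sqrt{\delta (16 + 6 \pi^{2})}$. Combining the two, isolating $\sqrt{4 \delta + 3 \lambda \pi^{2}}$ (legitimate since $\sqrt{16 + 6 \pi^{2}} > 2$) and squaring once yields
\[
\left( 16 + 6 \pi^{2} - 4 \sqrt{16 + 6 \pi^{2}} \right) \delta \leqslant 3 \lambda \pi^{2} ,
\]
so that $\delta \leqslant \frac{3 \pi^{2}}{16 + 6 \pi^{2} - 4 \sqrt{16 + 6 \pi^{2}}}\, \lambda$. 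A direct numerical check shows that $16 + 6 \pi^{2} - 4 \sqrt{16 + 6 \pi^{2}} > 0$ and that the coefficient of $\lambda$ on the right is smaller than $2 \sqrt{2}$ (it is approximately $0.73$); hence $\delta \leqslant 2 \sqrt{2}\, \lambda$, and the Hitchin-Thorpe inequality follows from the previous paragraph.
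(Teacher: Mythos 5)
Your argument is correct, but it takes a genuinely different route from the paper's. The paper treats Ma's criterion (Theorem \ref{Ma}: the Hitchin--Thorpe inequality holds once $\int_M R^2 \leqslant 24 \lambda^2 \mathrm{vol}(M,g)$) as a black box and verifies its hypothesis as follows: it weakens Theorem \ref{Main-Theorem} to $\operatorname{diam}(M,g) < \tfrac{2}{\lambda}\sqrt{4(R_{\mathrm{max}}-R_{\mathrm{min}})+3\lambda\pi^2}$, plays this against (\ref{diam-Cor-1}) to get $R_{\mathrm{max}}-R_{\mathrm{min}} < 2\lambda$, deduces $R_{\mathrm{max}} < 6\lambda$ from $R_{\mathrm{min}} \leqslant 4\lambda$ (the average of $R$ is $4\lambda$), and concludes via $\int_M R^2 \leqslant R_{\mathrm{max}} \int_M R = 4\lambda R_{\mathrm{max}} \mathrm{vol}(M,g)$ --- a step that silently uses $R \geqslant 0$, which holds on compact shrinkers but is not recorded in the paper. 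You instead re-derive Ma's criterion from scratch: the Gauss--Bonnet and signature formulas reduce Hitchin--Thorpe to $12\int_M |\operatorname{Ric}_g - \tfrac{R}{4}g|^2 \leqslant \int_M R^2$, and your integral identity $\int_M |\operatorname{Ric}_g - \tfrac{R}{4}g|^2 = \tfrac14 \int_M R^2 - 4\lambda^2\mathrm{vol}(M,g)$ is indeed correct --- it follows by integrating the soliton identity $\Delta R = \langle \nabla f, \nabla R\rangle + 2\lambda R - 2|\operatorname{Ric}_g|^2$ and using $\int_M R = 4\lambda\,\mathrm{vol}(M,g)$, and it is precisely the content of Ma's proof, so you should state that computation rather than leave it as ``I expect to arrive at.'' Your verification of $\int_M R^2 \leqslant 24\lambda^2\mathrm{vol}(M,g)$ via the variance decomposition $\int_M R^2 = \int_M (R-4\lambda)^2 + 16\lambda^2\mathrm{vol}(M,g)$ and the pointwise bound $|R - 4\lambda| \leqslant R_{\mathrm{max}} - R_{\mathrm{min}}$ needs only the pinching $R_{\mathrm{max}} - R_{\mathrm{min}} \leqslant 2\sqrt{2}\,\lambda$ and avoids the sign of $R$ altogether; and by keeping the full strength of Theorem \ref{Main-Theorem} (not discarding the $2\sqrt{\delta}$ term) you extract the stronger pinching $\delta \leqslant \tfrac{3\pi^2}{16+6\pi^2-4\sqrt{16+6\pi^2}}\lambda \approx 0.73\,\lambda$, with ample room to spare. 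In short: your proof is self-contained where the paper's relies on a citation, and it is slightly more careful about the implicit positivity of $R$; the paper's is shorter modulo Theorem \ref{Ma}.
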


This note is organized as follows: In Section 2, after introducing our notation, we shall prove Theorem \ref{Main-Theorem} and Corollary \ref{Main-Corollary}. Ending with Section 3, a proof of Corollary \ref{Cor-1} and some related result will be given.

\begin{Acknowledgements}\rm
I thank Professor Toshiki Mabuchi for his encouragements. This work was carried out while the author was visiting University of Santiago de Compostela. I also thank Professor Eduardo Garc\'{i}a-R\'{i}o for his warm hospitality.
\end{Acknowledgements}

\section{Preliminaries}

In this section, after introducing our notation, we shall prove Theorem \ref{Main-Theorem} and Corollary \ref{Main-Corollary}. Let $X, Y, Z \in \mathfrak{X}(M)$ be three vector fields on $M$. For any smooth function $f \in \mathcal{C}^{\infty}(M)$, the gradient vector field and Hessian of $f$ are defined by
\[
g( \nabla f, X) = df(X) \quad \mbox{and} \quad H_{f}(X, Y) = g(\nabla_{X} \nabla f, Y), 
\]
respectively. The curvature tensor and Ricci tensor are defined by
\[
R(X, Y)Z = \nabla_{X} \nabla_{Y} Z - \nabla_{Y} \nabla_{X} Z - \nabla_{[X, Y]} Z \quad \mbox{and} \quad \operatorname{Ric}(X, Y) = \sum_{i = 1}^{n} g(R(e_{i}, X)Y, e_{i}), 
\]
respectively. Here, $\{ e_{i} \}_{i = 1}^{n}$ is an orthonormal frame of $(M, g)$. In order to prove Theorem \ref{Main-Theorem}, we will use the index form of a minimizing unit speed geodesic segment. We refer the reader to books \cite{Lee, Petersen} for basic facts about this topic. The following Myers type theorem plays an important role in proving Theorem \ref{Main-Theorem}.

\begin{thm}\label{thm-1}
Let $(M, g)$ be an $n$-dimensional complete connected Riemannian manifold. Suppose that $(M, g)$ admits a smooth vector field $V$ satisfying
\begin{equation}\label{assumption}
\operatorname{Ric}_{g} + \mathcal{L}_{V} g \geqslant (n - 1)C g \quad and \quad | V | \leqslant \gamma
\end{equation}
for some constants $C > 0$ and $\gamma \geqslant 0$. Then $(M, g)$ is compact and the diameter of $(M, g)$ has the upper bound
\begin{equation}\label{diam-result}
\operatorname{diam}(M, g) \leqslant \frac{4 \gamma + \sqrt{16 \gamma^{2} + (n - 1)^{2} C \pi^{2}}}{(n - 1)C}.
\end{equation}
\end{thm}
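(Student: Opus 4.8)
The plan is to adapt the classical Bonnet–Myers argument, replacing the lower Ricci bound by the modified curvature condition in \eqref{assumption} and controlling the extra terms coming from $V$ by means of the bound $|V| \le \gamma$. Fix two points $p, q \in M$ realizing the diameter and let $\sigma : [0, \ell] \to M$ be a minimizing unit-speed geodesic from $p$ to $q$, where $\ell = \operatorname{diam}(M, g)$. Let $\{e_1(t), \dots, e_{n-1}(t), \dot\sigma(t)\}$ be a parallel orthonormal frame along $\sigma$. For a function $\varphi \in C^\infty([0,\ell])$ with $\varphi(0) = \varphi(\ell) = 0$, consider the variation fields $W_i(t) = \varphi(t) e_i(t)$; since $\sigma$ is minimizing, the index form satisfies $I(W_i, W_i) \ge 0$ for each $i$. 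Summing over $i = 1, \dots, n-1$ gives the standard inequality
\[
0 \le \sum_{i=1}^{n-1} I(W_i, W_i) = \int_0^\ell \left( (n-1)\dot\varphi(t)^2 - \varphi(t)^2 \operatorname{Ric}_g(\dot\sigma, \dot\sigma) \right) dt.
\]

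Next I would feed in the hypothesis. Writing the soliton-type condition along $\sigma$, we have $\operatorname{Ric}_g(\dot\sigma, \dot\sigma) \ge (n-1)C - (\mathcal{L}_V g)(\dot\sigma, \dot\sigma) = (n-1)C - 2 g(\nabla_{\dot\sigma} V, \dot\sigma)$. The crucial observation is that $g(\nabla_{\dot\sigma} V, \dot\sigma) = \frac{d}{dt} g(V, \dot\sigma)$, since $\sigma$ is a geodesic, so this term integrates against $\varphi^2$ and can be handled by integration by parts: $\int_0^\ell \varphi^2 \cdot 2 \frac{d}{dt} g(V,\dot\sigma)\, dt = -\int_0^\ell 4 \varphi \dot\varphi\, g(V,\dot\sigma)\, dt$ (boundary terms vanish), and the latter is bounded in absolute value by $4\gamma \int_0^\ell |\varphi||\dot\varphi|\, dt$. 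Combining, we obtain
\[
0 \le (n-1)\int_0^\ell \dot\varphi^2\, dt - (n-1)C \int_0^\ell \varphi^2\, dt + 4\gamma \int_0^\ell |\varphi| |\dot\varphi|\, dt.
\]

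The final step is the optimal choice of test function and the resulting estimate on $\ell$. Taking $\varphi(t) = \sin(\pi t/\ell)$, one computes $\int_0^\ell \dot\varphi^2 = \frac{\pi^2}{2\ell}$, $\int_0^\ell \varphi^2 = \frac{\ell}{2}$, and $\int_0^\ell |\varphi||\dot\varphi| = \frac{\pi}{\ell}\int_0^\ell |\sin(\pi t/\ell)\cos(\pi t/\ell)|\,dt = 1$. Substituting yields $0 \le (n-1)\frac{\pi^2}{2\ell} - (n-1)C \frac{\ell}{2} + 4\gamma$, i.e. $(n-1)C \ell^2 - 8\gamma \ell - (n-1)\pi^2 \le 0$, and solving this quadratic inequality in $\ell$ gives exactly \eqref{diam-result}. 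Compactness of $(M,g)$ then follows since the diameter is finite and $M$ is complete, by the Hopf–Rinow theorem. I expect the main technical point — though not a serious obstacle — to be the bookkeeping in the integration by parts, ensuring the boundary terms genuinely vanish (they do, because $\varphi(0) = \varphi(\ell) = 0$) and that the factor of $4\gamma$ rather than $2\gamma$ is tracked correctly; the choice $\varphi = \sin(\pi t/\ell)$ is what makes the $|\varphi\dot\varphi|$ integral clean and produces the stated constants.
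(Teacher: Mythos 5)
Your proposal is correct and follows essentially the same route as the paper: index form along a minimizing geodesic with test fields $\varphi e_i$, integration by parts on the $\mathcal{L}_V g$ term, the absolute-value bound $4\gamma\int|\varphi\dot\varphi|\,dt$, and the choice $\varphi=\sin(\pi t/\ell)$ leading to the same quadratic $(n-1)C\ell^2-8\gamma\ell-(n-1)\pi^2\leqslant 0$. The only cosmetic point is that you should run the argument for an arbitrary pair $p,q$ joined by a minimizing geodesic (which exists by completeness and Hopf--Rinow) rather than a pair ``realizing the diameter,'' since the finiteness of the diameter is part of the conclusion.
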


\begin{rem}\rm
Under the same condition as in Theorem \ref{thm-1}, Limoncu \cite{L} gave the following diameter estimate
\begin{equation}\label{diam-L}
\operatorname{diam}(M, g) \leqslant \frac{\pi}{(n - 1)C} \left( \sqrt{2} \gamma + \sqrt{2 \gamma^{2} + (n - 1)^{2}C} \right).
\end{equation}
Since
\[
\sqrt{2} \pi \approx 4.44288 > 4 \quad \mbox{and} \quad 2 \pi^{2} \approx 19.73920 > 16, 
\]
our diameter estimate (\ref{diam-result}) is sharper than (\ref{diam-L}).
\end{rem}

\begin{proof}[Proof of Theorem {\rm \ref{thm-1}}]
Our proof of Theorem \ref{thm-1} is similar to that by Limoncu \cite{L}. Take arbitrary two points $p, q \in M$. By the compactness of the manifold $(M, g)$, there exists the minimizing unit speed geodesic segment $\sigma$ from $p$ to $q$ of length $\ell$. Let $\{ e_{1} = \dot{\sigma}, e_{2}, \cdots, e_{n} \}$ be a parallel orthonormal frame along $\sigma$. Recall that, for any smooth function $\phi \in \mathcal{C}^{\infty}([0, \ell])$ satisfying $\phi(0) = \phi(\ell) = 0$, we obtain
\begin{equation}\label{Index}
I(\phi e_{i}, \phi e_{i}) = \int_{0}^{\ell} \left( g(\dot{\phi} e_{i}, \dot{\phi} e_{i}) - g(R(\phi e_{i}, \dot{\sigma}) \dot{\sigma}, \phi e_{i}) \right) dt, 
\end{equation}
where $I(\cdot, \cdot)$ denotes the index form of $\sigma$. From (\ref{Index}), we have
\begin{equation}\label{Index-sum}
\sum_{i = 2}^{n} I(\phi e_{i}, \phi e_{i}) = \int_{0}^{\ell} \left( (n - 1) \dot{\phi}^{2} - \phi^{2} \operatorname{Ric}_{g}(\dot{\sigma}, \dot{\sigma}) \right) dt, 
\end{equation}
where we have used $g(R(\dot{\sigma}, \dot{\sigma})\dot{\sigma}, \dot{\sigma}) = 0$. By using the assumption (\ref{assumption}) in the integral expression (\ref{Index-sum}), we obtain
\begin{align}
\sum_{i = 2}^{n} I(\phi e_{i}, \phi e_{i}) & \leqslant \int_{0}^{\ell} \left( (n - 1)(\dot{\phi}^{2} - C \phi^{2}) + \phi^{2} (\mathcal{L}_{V} g)(\dot{\sigma}, \dot{\sigma}) \right) dt \nonumber \\
& = \int_{0}^{\ell} \left( (n - 1)(\dot{\phi}^{2} - C \phi^{2}) + 2 \phi^{2} g(\nabla_{\dot{\sigma}} V, \dot{\sigma}) \right) dt \nonumber \\
& = \int_{0}^{\ell} \left( (n - 1)(\dot{\phi}^{2} - C \phi^{2}) + 2 \phi^{2} \dot{\sigma}(g(V, \dot{\sigma})) \right) dt, \label{eq1}
\end{align}
where, the last equality follows from the parallelism of the metric $g$ and $\nabla_{\dot{\sigma}} \dot{\sigma} = 0$. On the geodesic segment $\sigma(t)$, we have
\begin{align}
2 \phi^{2} \dot{\sigma} \left( g(V, \dot{\sigma}) \right) & = 2 \phi^{2} \frac{d}{dt}(g(V, \dot{\sigma})) \nonumber \\
& = - 4 \phi \dot{\phi} g(V, \dot{\sigma}) + 2 \frac{d}{dt}(\phi^{2} g(V, \dot{\sigma}). \label{eq2}
\end{align}
Hence, by integrating both sides of (\ref{eq2}), we have
\begin{align}
\int_{0}^{\ell} 2 \phi^{2} \dot{\sigma}( g(V, \dot{\sigma})) dt & = \int_{0}^{\ell} - 4 \phi \dot{\phi} g(V, \dot{\sigma}) dt + \left[ 2 \phi^{2} g(V, \dot{\sigma}) \right]_{0}^{\ell} \nonumber \\
& = \int_{0}^{\ell} - 4 \phi \dot{\phi} g(V, \dot{\sigma}) dt \label{eq3} \\
& \leqslant 4 \int_{0}^{\ell} \left| \phi \dot{\phi} g(V, \dot{\sigma}) \right| dt, \label{eq4}
\end{align}
where, the second equality follows from $\phi(0) = \phi(\ell) = 0$. Since $\sigma$ is a unit speed geodesic segment, the Cauchy-Schwarz inequality implies $| g(V, \dot{\sigma}) | \leqslant | V |$. By combining this inequality and the assumption $| V | \leqslant \gamma$ in Theorem \ref{thm-1}, we have $| g(V, \dot{\sigma}) | \leqslant \gamma$. Hence, from (\ref{eq4}) we obtain
\begin{equation}\label{eq5}
\int_{0}^{\ell} 2 \phi^{2} \dot{\sigma}( g(V, \dot{\sigma})) dt \leqslant 4 \gamma \int_{0}^{\ell} \left| \phi \dot{\phi} \right| dt.
\end{equation}
From (\ref{eq1}) and (\ref{eq5}), we have
\begin{equation}\label{eq-main}
\sum_{i = 2}^{n} I(\phi e_{i}, \phi e_{i}) \leqslant \int_{0}^{\ell} (n - 1)(\dot{\phi}^{2} - C \phi^{2}) dt + 4 \gamma \int_{0}^{\ell} \left| \phi \dot{\phi} \right| dt.
\end{equation}
If the funtion $\phi$ is taken to be $\phi(t) = \sin (\frac{\pi t}{\ell})$, then we obtain $\dot{\phi}(t) = \frac{\pi}{\ell} \cos (\frac{\pi t}{\ell})$ and
\[
\phi \dot{\phi} = \frac{\pi}{\ell} \sin \left( \frac{\pi t}{\ell} \right) \cos \left( \frac{\pi t}{\ell} \right) = \frac{\pi}{2 \ell} \sin \left( \frac{2 \pi t}{\ell} \right).
\]
Then, (\ref{eq-main}) becomes
\[
\begin{aligned}
\sum_{i = 2}^{n} I(\phi e_{i}, \phi e_{i}) & \leqslant (n - 1) \int_{0}^{\ell} \left( \frac{\pi^{2}}{\ell^{2}} \cos^{2} \left( \frac{\pi t}{\ell} \right) - C \sin^{2} \left( \frac{\pi t}{\ell} \right) \right) dt \\
& \quad + \frac{2 \gamma \pi}{\ell} \int_{0}^{\ell} \left| \sin \frac{2 \pi t}{\ell} \right| dt, 
\end{aligned}
\]
and consequently, we have
\[
\sum_{i = 2}^{n} I(\phi e_{i}, \phi e_{i}) \leqslant - \frac{1}{2 \ell} \left( (n - 1) C \ell^{2} - 8 \gamma \ell - (n - 1) \pi^{2} \right).
\]
Since $\sigma$ is a minimizing geodesic, we must obtain
\[
(n - 1)C \ell^{2} - 8 \gamma \ell - (n - 1) \pi^{2} \leqslant 0, 
\]
from where, we have
\[
\ell \leqslant \frac{4 \gamma + \sqrt{16 \gamma^{2} + (n - 1)^{2} C \pi^{2}}}{(n - 1)C}.
\]
This proves Theorem \ref{thm-1}.
\end{proof}

\begin{rem}\rm
Using Cauchy-Schwarz inequality, Limoncu estimated (\ref{eq3}) from above by
\[
\int_{0}^{\ell} 2 \phi^{2} \dot{\sigma}( g(V, \dot{\sigma})) dt = \int_{0}^{\ell} - 4 \phi \dot{\phi} g(V, \dot{\sigma}) dt \leqslant 4 \sqrt{\int_{0}^{\ell} (\phi \dot{\phi})^{2} dt} \sqrt{\int_{0}^{\ell} (g(V, \dot{\sigma}))^{2} dt}, 
\]
while we estimated (\ref{eq3}) from above by an absolute value in (\ref{eq4}) and obtained a better estimate (\ref{diam-result}) than (\ref{diam-L}).
\end{rem}

The following lemma is useful to prove Theorem \ref{Main-Theorem}.

\begin{lem}[Fern\'{a}ndez-L\'{o}pez and Garc\'{i}a-R\'{i}o \cite{FL-GR2}]\label{lem2}
Let $(M, g)$ be an $n$-dimensional compact shrinking Ricci soliton satisfying {\rm (\ref{GRS})}. Then
\begin{equation}\label{lem-ineq}
| \nabla f |^{2} \leqslant R_{\mathrm{max}} - R, 
\end{equation}
where $R$ denotes the scalar curvetre on the soliton.
\end{lem}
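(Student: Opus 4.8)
The plan is to deduce (\ref{lem-ineq}) from the two standard identities satisfied by a compact gradient Ricci soliton, and then to use the compactness of $M$ to pin down a constant by evaluating it at a maximum point of the potential function $f$.

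First, I would recall the structural identities. Tracing (\ref{GRS}) gives $\Delta f = n\lambda - R$. Taking the divergence of (\ref{GRS}) and combining the contracted second Bianchi identity, $\operatorname{div}\operatorname{Ric}_{g} = \tfrac12\, dR$, with the commutation formula $\operatorname{div} H_{f} = d(\Delta f) + \operatorname{Ric}_{g}(\nabla f, \cdot)$, one obtains, after substituting $\Delta f = n\lambda - R$, the identity $\operatorname{Ric}_{g}(\nabla f, \cdot) = \tfrac12\, dR$ (see, e.g., \cite{Petersen}). This is essentially the only computation needed.

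Next, using $H_{f} = \lambda g - \operatorname{Ric}_{g}$ from (\ref{GRS}), I would compute
\[
\tfrac12\, d\big(|\nabla f|^{2}\big) = H_{f}(\nabla f, \cdot) = \lambda\, df - \operatorname{Ric}_{g}(\nabla f, \cdot) = \lambda\, df - \tfrac12\, dR,
\]
so that $d\big(|\nabla f|^{2} + R - 2\lambda f\big) = 0$. Since $M$ is connected, $|\nabla f|^{2} + R - 2\lambda f \equiv C_{0}$ for some constant $C_{0}$ (this is Hamilton's identity). Because $M$ is compact, $f$ attains its maximum at some $p \in M$, where $\nabla f(p) = 0$; evaluating the identity there gives $C_{0} = R(p) - 2\lambda f_{\mathrm{max}}$. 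Hence, for every $x \in M$,
\[
|\nabla f|^{2}(x) = 2\lambda\big(f(x) - f_{\mathrm{max}}\big) + R(p) - R(x) \leqslant R(p) - R(x) \leqslant R_{\mathrm{max}} - R(x),
\]
where the first inequality uses $\lambda > 0$ and $f \leqslant f_{\mathrm{max}}$, and the second uses $R(p) \leqslant R_{\mathrm{max}}$. This is exactly (\ref{lem-ineq}).

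I do not expect a genuine obstacle here: the constancy of $|\nabla f|^{2} + R - 2\lambda f$ is the classical soliton identity, and the verification reduces to the two formulas quoted above. The only point demanding a little care is to evaluate the constant $C_{0}$ at a \emph{maximum} point of $f$ (rather than at an arbitrary critical point), since that is what makes the sign of the $\lambda(f - f_{\mathrm{max}})$ term work in our favour and produces $R_{\mathrm{max}}$ on the right-hand side.
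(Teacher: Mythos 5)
Your proof is correct and follows essentially the same route as the paper: both rest on Hamilton's identity $R + |\nabla f|^{2} - 2\lambda f = \mathrm{const}$, evaluated at a global maximum point of $f$ where $\nabla f$ vanishes, and both use $\lambda > 0$ to discard the $2\lambda(f - f_{\mathrm{max}})$ term. The only difference is that you derive the identity from the Bianchi and commutation formulas while the paper simply quotes it as well-known; this does not change the argument.
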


\begin{proof}
We recall the proof for the reader's convenience. It is well-known that the potential function $f$ of any gradient Ricci soliton $(M, g)$ satisfies
\begin{equation}\label{fund-eq2}
R + | \nabla f |^{2} - 2 \lambda f = C
\end{equation}
for some constant $C$, where $R$ denotes the scalar curvature on the soliton. By compactness of the manifold $M$, there exists some global maximum point $p \in M$ of the potential function. Then, it follows from (\ref{fund-eq2}) that, for any point $x \in M$, 
\begin{equation}\label{lem-eq}
2 \lambda f(p) = R(p) - C \geqslant 2 \lambda f(x) = R(x) + | \nabla f |^{2} (x) - C, 
\end{equation}
and hence, $R(p) \geqslant R(x)$. Therefore, the scalar curvature also attains its maximum at $p$, and we obtain (\ref{lem-ineq}).
\end{proof}

Now, we are in a position to prove Theorem \ref{Main-Theorem}.

\begin{proof}[Proof of Theorem {\rm \ref{Main-Theorem}}]
We apply Theorem \ref{thm-1} to the case that $(M, g)$ is a compact gradient shrinking Ricci soliton. From (\ref{lem-ineq}), we have $| \nabla f | \leqslant \sqrt{R_{\mathrm{max}} - R_{\mathrm{min}}}$. Hence, by applying
\[
V = \frac{1}{2} \nabla f, \quad C = \frac{\lambda}{n - 1} \quad \mbox{and} \quad \gamma = \frac{1}{2} \sqrt{R_{\mathrm{max}} - R_{\mathrm{min}}}
\]
to (\ref{diam-result}), we obtain (\ref{diam-Main-Theorem}).
\end{proof}

\begin{proof}[Proof of Corollary {\rm \ref{Main-Corollary}}]
We show that
\begin{equation}\label{Main-Corollary-eq}
2 \lambda f_{\mathrm{max}} - 2 \lambda f_{\mathrm{min}} = R_{\mathrm{max}} - R_{\mathrm{min}}.
\end{equation}
Although this equality was already proved by Fern\'{a}ndez-L\'{o}pez and Garc\'{i}a-R\'{i}o in \cite{FL-GR1}, we here show it for the reader's convenience. By using (\ref{fund-eq2}) and (\ref{GRS}), we obtain
\begin{equation}\label{fund-eq3}
\operatorname{Ric}_{g}(\nabla f, \cdot) = \frac{1}{2} dR.
\end{equation}
By compactness of the manifold $M$, there exists some global minimum point $q \in M$ of the scalar curvature. From (\ref{fund-eq3}), we have $0 = (\nabla R)(q) = 2 \operatorname{Ric}_{g}(\nabla f, \cdot)(q)$. Since $(M, g)$ has positive Ricci curvature, we have $(\nabla f)(q) = 0$. Then, it follows from (\ref{fund-eq2}) that, for any $x \in M$, 
\[
\begin{aligned}
R(q) = 2 \lambda f(q) - | \nabla f |^{2}(q) + C & = 2 \lambda f(q) + C \\
& \leqslant R(x) = 2 \lambda f(x) - | \nabla f |^{2}(x) + C \leqslant 2 \lambda f(x) + C, 
\end{aligned}
\]
from where we see that $q \in M$ is also a global minimum of the potential function, and hence, $R_{\mathrm{min}} = 2 \lambda f_{\mathrm{min}} + C$. On the other hand, we have shown in (\ref{lem-eq}) that $R_{\mathrm{max}} = 2 \lambda f_{\mathrm{max}} + C$. Therefore, we obtain (\ref{Main-Corollary-eq}). Corollary \ref{Main-Corollary} follows immediately from Theorem \ref{Main-Theorem} and (\ref{Main-Corollary-eq}).
\end{proof}

\section{Applications to Theorem {\rm \ref{Main-Theorem}}}

In this section, by using Theorem \ref{Main-Theorem}, we shall give a proof of Corollary \ref{Cor-1}. Throughout this section, we assume that $(M, g)$ is a compact connected shrinking Ricci soliton satisfying (\ref{GRS}). We use the following theorem to prove Corollary \ref{Cor-1}.

\begin{thm}[Ma \cite{Ma}]\label{Ma}
Let $(M, g)$ be a four-dimensional compact shrinking Ricci soliton satisfying {\rm (\ref{GRS})}. If the scalar curvature satisfies
\[
\int_{M} R^{2} \leqslant 24 \lambda^{2} \mathrm{vol}(M, g), 
\]
then the soliton $(M, g)$ satisfies the Hitchin-Thorpe inequality $2 \chi(M) \geqslant 3 | \tau(M) |$.
\end{thm}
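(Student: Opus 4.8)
The plan is to prove this classical result of Ma by combining the Gauss--Bonnet--Chern and Hirzebruch signature formulas in dimension four with an integral identity special to shrinking solitons. First I would recall that for a four-dimensional compact oriented Riemannian manifold one has the curvature expressions
\[
2 \chi(M) \pm 3 \tau(M) = \frac{1}{4 \pi^{2}} \int_{M} \left( 2 | W^{\pm} |^{2} + \frac{R^{2}}{24} - \frac{1}{2} | E |^{2} \right) dV,
\]
where $W^{\pm}$ denote the self-dual and anti-self-dual parts of the Weyl tensor and $E := \operatorname{Ric}_{g} - \frac{R}{4} g$ is the traceless Ricci tensor. Since the Weyl terms $2 | W^{\pm} |^{2}$ are pointwise nonnegative, the Hitchin--Thorpe inequality $2 \chi(M) \geqslant 3 | \tau(M) |$ will follow once I show
\[
\int_{M} \left( \frac{R^{2}}{24} - \frac{1}{2} | E |^{2} \right) dV \geqslant 0.
\]
Thus everything reduces to controlling $\int_{M} | E |^{2}$ in terms of $\int_{M} R^{2}$ and $\operatorname{vol}(M, g)$.

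The key step is to establish the soliton identity
\[
\int_{M} | E |^{2} \, dV = \frac{1}{4} \int_{M} R^{2} \, dV - 4 \lambda^{2} \operatorname{vol}(M, g).
\]
To derive it I would first observe that taking the trace of (\ref{GRS}) in dimension four gives $\Delta f = 4 \lambda - R$, so that the traceless part of the Hessian of $f$ equals $- E$; in particular $| E |^{2} = | H_{f} |^{2} - \frac{1}{4} (\Delta f)^{2}$. Integrating the Bochner formula $\frac{1}{2} \Delta | \nabla f |^{2} = | H_{f} |^{2} + \langle \nabla f, \nabla \Delta f \rangle + \operatorname{Ric}_{g}(\nabla f, \nabla f)$ over the compact manifold, the left side vanishes; the middle term becomes $- \int_{M} (\Delta f)^{2}$ after integration by parts, and the last term is handled using the contracted identity (\ref{fund-eq3}), which gives $\operatorname{Ric}_{g}(\nabla f, \nabla f) = \frac{1}{2} \langle \nabla R, \nabla f \rangle$ and hence $\int_{M} \operatorname{Ric}_{g}(\nabla f, \nabla f) = - \frac{1}{2} \int_{M} R \, \Delta f$. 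Combining these with the elementary integral identity $\int_{M} \Delta f = 0$, which yields $\int_{M} R = 4 \lambda \operatorname{vol}(M, g)$, and with $\Delta f = 4 \lambda - R$, a short computation collapses everything to the displayed identity.

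Finally I would substitute this identity into the reduced inequality: a direct calculation gives
\[
\int_{M} \left( \frac{R^{2}}{24} - \frac{1}{2} | E |^{2} \right) dV = 2 \lambda^{2} \operatorname{vol}(M, g) - \frac{1}{12} \int_{M} R^{2} \, dV,
\]
which is nonnegative precisely when $\int_{M} R^{2} \leqslant 24 \lambda^{2} \operatorname{vol}(M, g)$, the stated hypothesis. Since the same Weyl-positivity argument applies to both choices of sign, this yields $2 \chi(M) \geqslant 3 | \tau(M) |$ simultaneously. The main obstacle is the middle step: correctly assembling the Bochner identity with the soliton structure to prove the clean identity for $\int_{M} | E |^{2}$. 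The potentially delicate points are the sign bookkeeping in the integrations by parts and the combined use of (\ref{fund-eq3}) together with $\int_{M} R = 4 \lambda \operatorname{vol}(M, g)$, both of which rely essentially on the compactness of $(M, g)$.
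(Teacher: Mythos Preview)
The paper does not actually prove this theorem; it is quoted from Ma \cite{Ma} and used only as a black box in the proof of Corollary \ref{Cor-1}. So there is no ``paper's own proof'' to compare against.

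That said, your argument is correct and is precisely the standard route. The Gauss--Bonnet--Chern and signature formulas reduce the Hitchin--Thorpe inequality to $\int_{M} |E|^{2} \leqslant \frac{1}{12} \int_{M} R^{2}$, and your derivation of the soliton identity
\[
\int_{M} |E|^{2} \, dV = \frac{1}{4} \int_{M} R^{2} \, dV - 4 \lambda^{2} \operatorname{vol}(M, g)
\]
via the integrated Bochner formula, the relation $\operatorname{Ric}_{g}(\nabla f, \cdot) = \frac{1}{2} dR$, and $\int_{M} R = 4 \lambda \operatorname{vol}(M, g)$ is clean and the bookkeeping checks out. Substituting gives exactly $2 \lambda^{2} \operatorname{vol}(M, g) - \frac{1}{12} \int_{M} R^{2}$, so the hypothesis is sharp for this method. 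This is essentially how Ma proceeds as well, so your proposal recovers the cited result rather than offering an alternative.
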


\begin{proof}[Proof of Corollary {\rm \ref{Cor-1} }]
By taking the trace of (\ref{GRS}), we have
\begin{equation}\label{Cor-1-eq1}
R + \Delta f = 4 \lambda.
\end{equation}
Thanks to Theorem \ref{Main-Theorem}, the diameter of $(M, g)$ has the upper bound
\begin{equation}\label{diam-Cor-1-eq2}
\operatorname{diam}(M, g) < \frac{2}{\lambda} \sqrt{4(R_{\mathrm{max}} - R_{\mathrm{min}}) + 3 \lambda \pi^{2}}.
\end{equation}
Suppose that the inequality (\ref{diam-Cor-1}) holds. Then, from (\ref{diam-Cor-1-eq2}), we obtain
\[
\frac{R_{\mathrm{max}} - R_{\mathrm{min}}}{\lambda^{2}}(16 + 6 \pi^{2}) \leqslant \operatorname{diam}^{2}(M, g) < \frac{4}{\lambda^{2}} \left \{ 4(R_{\mathrm{max}} - R_{\mathrm{min}}) + 3 \lambda \pi^{2} \right \}, 
\]
from where we have $R_{\mathrm{max}} < 6 \lambda$. Hence, by (\ref{Cor-1-eq1}), we have
\[
\int_{M} R^{2} \leqslant R_{\mathrm{max}} \int_{M} R < 24 \lambda^{2} \mathrm{vol}(M, g), 
\]
and the result follows from Theorem \ref{Ma}.
\end{proof}

The following result follows immediately from Theorem \ref{Main-Theorem} and Theorem \ref{Ma}.

\begin{cor}\label{Cor-2}
Let $(M, g)$ be a four-dimensional compact connected shrinking Ricci soliton satisfying {\rm (\ref{GRS})}. If
\[
\frac{R_{\mathrm{max}}}{6 \lambda} \cdot \frac{1}{\lambda} \left( 2 \sqrt{R_{\mathrm{max}} - R_{\mathrm{min}}} + \sqrt{4(R_{\mathrm{max}} - R_{\mathrm{min}}) + 3 \lambda \pi^{2}} \right) \leqslant \operatorname{diam}(M, g), 
\]
then the soliton satisfies the Hitchin-Thorpe inequality $2 \chi(M) \geqslant 3 | \tau(M) |$.
\end{cor}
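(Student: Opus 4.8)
The plan is to chain together Theorem \ref{Main-Theorem} (specialised to $n=4$), the hypothesis, the trace of the soliton equation, and Theorem \ref{Ma}, in essentially the same way as in the proof of Corollary \ref{Cor-1}; in fact the argument is shorter, since no intermediate estimate like (\ref{diam-Cor-1-eq2}) is needed this time.

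First I would abbreviate $A := \frac{1}{\lambda}\bigl(2\sqrt{R_{\mathrm{max}}-R_{\mathrm{min}}} + \sqrt{4(R_{\mathrm{max}}-R_{\mathrm{min}}) + 3\lambda\pi^{2}}\bigr)$, which is exactly the right-hand side of (\ref{diam-Main-Theorem}) when $n=4$. By Theorem \ref{Main-Theorem} we have $\operatorname{diam}(M,g)\le A$, while the hypothesis of the corollary reads $\frac{R_{\mathrm{max}}}{6\lambda}\,A\le\operatorname{diam}(M,g)$. Combining the two gives $\frac{R_{\mathrm{max}}}{6\lambda}\,A\le A$. Since $\lambda>0$ (the soliton is shrinking) and the summand $3\lambda\pi^{2}$ under the inner square root is strictly positive, $A>0$, so this cancellation is legitimate and yields $R_{\mathrm{max}}\le 6\lambda$.

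Next I would take the trace of (\ref{GRS}) with $n=4$ to obtain $R+\Delta f=4\lambda$ as in (\ref{Cor-1-eq1}), and integrate it over the compact manifold $M$; the divergence theorem kills $\int_{M}\Delta f$, leaving $\int_{M}R=4\lambda\,\mathrm{vol}(M,g)$. Using the well-known positivity of the scalar curvature of a compact shrinking Ricci soliton, so that $R(R_{\mathrm{max}}-R)\ge 0$ pointwise, I then get $\int_{M}R^{2}\le R_{\mathrm{max}}\int_{M}R\le 6\lambda\cdot 4\lambda\,\mathrm{vol}(M,g)=24\lambda^{2}\,\mathrm{vol}(M,g)$. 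Theorem \ref{Ma} now applies and gives $2\chi(M)\ge 3|\tau(M)|$.

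There is no real obstacle here: the substantive content lies entirely in Theorem \ref{Main-Theorem} and Theorem \ref{Ma}, and this corollary merely records their combination. The only two points deserving a word of care are that the common factor $A$ is genuinely nonzero, so that the cancellation step producing $R_{\mathrm{max}}\le 6\lambda$ is valid, and that $R\ge 0$ on a compact shrinking Ricci soliton, which is what makes the estimate $\int_{M}R^{2}\le R_{\mathrm{max}}\int_{M}R$ hold.
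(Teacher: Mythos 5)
Your proof is correct and follows essentially the route the paper intends: the paper states that Corollary \ref{Cor-2} "follows immediately from Theorem \ref{Main-Theorem} and Theorem \ref{Ma}," and your cancellation of the common factor $A$ to get $R_{\mathrm{max}} \leqslant 6\lambda$, followed by $\int_M R^2 \leqslant R_{\mathrm{max}} \int_M R \leqslant 24\lambda^2 \mathrm{vol}(M,g)$, mirrors the argument given for Corollary \ref{Cor-1}. You are in fact slightly more careful than the paper in noting that $A>0$ and that $R\geqslant 0$ is needed for the estimate $\int_M R^2 \leqslant R_{\mathrm{max}}\int_M R$.
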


\end{document}